\newtheorem{theorem}{Theorem}[section]
\newtheorem{lemma}[theorem]{Lemma}
\newtheorem{cor}[theorem]{Corollary}
\newtheorem{proposition}[theorem]{Proposition}
\theoremstyle{definition}
\newtheorem{definition}[theorem]{Definition}
\newtheorem{question}[theorem]{Question}
\theoremstyle{remark}
\newtheorem{remark}[theorem]{Remark}
\numberwithin{equation}{section}
\newcommand{\OM}{\Omega}
\newcommand{\diver}{{{\rm{div}\,}}}
\newcommand{\hess}{\textrm{Hess}}
\newcommand{\grad}{\textrm{grad}\,}
\newcommand{\inj}{{\rm inj}}
\begin{document}

\title{Eigenvalue Estimates for  submanifolds  of  $N \times \mathbb{R}$\\ with locally bounded mean curvature}

\author{G. Pacelli Bessa}
\address{The Abdus Salam International Centre for Theoretical Physics, 34014 Trieste, Italy}
\curraddr{Department of Mathematics, Universidade Federal do Ceara-UFC, Campus do Pici, 60455-760 Fortaleza-CE
Brazil}
\email{bessa@mat.ufc.br}
\thanks{The first author was partially supported  by  a CNPq-grant and  ICTP Associate Schemes.}

\author{M. Silvana Costa}
\address{Department of Engineering, Universidade Federal do Ceara-UFC, Campus Cariri, Av. Castelo Branco, 150,
60030-200 Juazeiro do Norte-CE, Brazil}
\email{silvana\_math@yahoo.com.br}
\thanks{The second author was partially supported by a CNPq-scholarship}

\subjclass[2000]{Primary 53C40,
53C42; Secondary 58C40}

\date{ }

\dedicatory{}

\keywords{Fundamental tone estimates, minimal submanifolds, submanifolds with locally bounded mean curvature in $N\times \mathbb{R}$.}

\begin{abstract}
We give lower bounds for the fundamental tone of open sets in  submanifolds with locally bounded mean curvature in  $ N \times
\mathbb{R}$, where $N$ is an $n$-dimensional complete Riemannian
manifold with  radial sectional curvature $K_{N} \leq
\kappa$. When the immersion is minimal our estimates are sharp. We also show that cylindrically bounded minimal surfaces has positive fundamental tone.
\end{abstract}

\maketitle

\section{Introduction} The fundamental tone
$\lambda^{\ast}(\OM)$ of an open set $\OM$ in  a smooth Riemannian manifold $M$ is
defined by \[\lambda^{\ast}(\OM)=\inf\{\frac{\smallint_{\OM}\vert
\grad f \vert^{2}}{\smallint_{\OM} f^{2}};\, f\in
 H_{0}^1(\Omega) \backslash \{ 0\}\}.\]
 When $\OM=M$ is an open   Riemannian manifold,  the
fundamental tone $\lambda^{\ast}(M)$ coincides with the greatest
lower bound  $\inf \Sigma$ of the spectrum $\Sigma \subset
[0,\infty)$ of the unique self-adjoint extension of the Laplacian
$\triangle$ acting on $C_{0}^{\infty}(M)$ also denoted by
$\triangle$. When $\OM$ is compact with piecewise smooth boundary
$\partial \OM$ (possibly empty) then $\lambda^{\ast}(\OM)$ is the
first eigenvalue $\lambda_{1}(\OM)$ of $\OM$ (Dirichlet boundary data
if $\partial \OM\neq \emptyset$). A well studied problem in the geometry of the Laplacian is the relations between  the first eigenvalue or the fundamental tone of  open sets of Riemannian manifolds and the manifold's geometric invariants, see  \cite{berard},\cite{berger-gauduchon-mazet}, \cite{chavel} and references therein.  Another kind of problem is to give bounds for the the first eigenvalue (fundamental tone) of  open sets of minimal  submanifolds of Riemannian manifolds, see \cite{bessa-montenegro1}, \cite{bessa-montenegro2}, \cite{candel}, \cite{cheng-li-yau}, \cite{cheung-leung}.
 There has been recently an increasingly  interest on the study of  minimal surfaces (constant mean curvature)  in product spaces $N\times \mathbb{R}$, with  after the discovery of many beautiful examples in those spaces,  see \cite{meeks-rosenberg}, \cite{meeks-rosenberg2}. This motivates us to study the fundamental tone of  minimal submanifold of product spaces $N\times \mathbb{R}$. Our first result is the following theorem.
  \begin{theorem}\label{thm2}Let $\varphi :M\hookrightarrow N\times \mathbb{R} $ be a complete minimal $m$-dimensional submanifold, where $N$ has radial sectional curvature $K(\gamma (t))(\gamma'(t), v)\leq \kappa$, $v\in T_{\gamma (t)}N$, $\vert v\vert =1$, $v\perp \partial t$,  along the geodesics $\gamma (t)$ issuing from  a point  $x_{0}\in N$. Let $\OM \subset \varphi^{-1}(B_{N}(x_{0},r)\times \mathbb{R})$ be a connected component, where $r < \min \{\inj(x_{0}), \pi/2
\sqrt{\kappa} \}$, $(\pi/2
\sqrt{\kappa}=\infty$ if $\kappa\leq 0)$. Then \begin{equation}\label{sil-1}
\lambda^*(\Omega) \geq \lambda_1(B_{\mathbb{N}^{m-1}(\kappa)}(r)).
  \end{equation}If  $\Omega$ is bounded then inequality (\ref{sil-1}) is strict. Here $\mathbb{N}^{m-1}(\kappa)$ is the  $(m-1)$-dimensional simply connected space form of constant sectional curvature $\kappa$.
  \end{theorem}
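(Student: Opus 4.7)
The plan is to produce a positive Barta supersolution of the eigenvalue problem on $\Omega$ with eigenvalue $\lambda_{1}(B_{\mathbb{N}^{m-1}(\kappa)}(r))$, built from the model first eigenfunction composed with a radial coordinate. Let $\pi_{N}:N\times\mathbb{R}\to N$ denote the projection and set
\[
\eta(p)=\rho_{N}(\pi_{N}(\varphi(p))),
\]
so that $\eta:\Omega\to[0,r)$ is smooth (the condition $r<\inj(x_{0})$ makes $\rho_{N}$ smooth on $B_{N}(x_{0},r)$). Let $\phi$ be the first Dirichlet eigenfunction of $B_{\mathbb{N}^{m-1}(\kappa)}(r)$, taken radial, positive, and decreasing, so that
\[
\phi''(t)+(m-2)\,h_{\kappa}(t)\,\phi'(t)+\lambda_{1}\,\phi(t)=0,\quad \phi'(0)=0,\quad \phi(r)=0,
\]
with $h_{\kappa}(t)=C_{\kappa}'(t)/C_{\kappa}(t)$ and $\lambda_{1}=\lambda_{1}(B_{\mathbb{N}^{m-1}(\kappa)}(r))$. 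Take the test function $u=\phi\circ\eta$, which is positive on $\Omega$.

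The computation of $\Delta^{M}u=\phi''(\eta)|\nabla^{M}\eta|^{2}+\phi'(\eta)\Delta^{M}\eta$ rests on bounding $\Delta^{M}\eta$ below. Extending $\eta$ to $\tilde\eta=\rho_{N}\circ\pi_{N}$ on $N\times\mathbb{R}$, minimality of $\varphi$ (so $\vec{H}=0$) gives $\Delta^{M}\eta=\sum_{i=1}^{m}\hess^{N\times\mathbb{R}}\tilde\eta(e_{i},e_{i})$ for any orthonormal basis $\{e_{i}\}$ of $T_{p}M$. Because $\tilde\eta$ ignores the $\mathbb{R}$-factor, $\hess^{N\times\mathbb{R}}\tilde\eta(X,X)=\hess^{N}\rho_{N}(X_{N},X_{N})$ where $X_{N}=X-\langle X,T\rangle T$ for $T=\partial_{t}$. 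Hessian comparison (valid under $K_{N}\leq\kappa$ and the radius bound $r<\pi/2\sqrt{\kappa}$) yields $\hess^{N}\rho_{N}(V,V)\geq h_{\kappa}(\rho_{N})(|V|^{2}-\langle V,\nabla\rho_{N}\rangle^{2})$ for $V\in TN$. Summing over $\{e_{i}\}$ and using $|T^{\top}|^{2}\leq|T|^{2}=1$ (where $T^{\top}$ is the projection of $T$ onto $T_{p}M$) gives
\[
\Delta^{M}\eta\geq h_{\kappa}(\eta)\bigl[m-|T^{\top}|^{2}-|\nabla^{M}\eta|^{2}\bigr]\geq h_{\kappa}(\eta)\bigl[(m-1)-|\nabla^{M}\eta|^{2}\bigr].
\]
Since $\phi'\leq 0$ and the right-hand side is nonnegative, combining this with the ODE rewritten as $-\phi''=\lambda_{1}\phi-(m-2)h_{\kappa}|\phi'|$ causes the cross-terms in $\Delta^{M}u$ to collapse, producing
\[
-\Delta^{M}u\geq \lambda_{1}\,\phi(\eta)\,|\nabla^{M}\eta|^{2}+(m-1)\,h_{\kappa}(\eta)\,|\phi'(\eta)|\bigl(1-|\nabla^{M}\eta|^{2}\bigr).
\]

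The last step is to invoke the auxiliary ODE lemma $(m-1)h_{\kappa}(t)|\phi'(t)|/\phi(t)\geq \lambda_{1}$ for $t\in(0,r)$; this ratio tends to $\lambda_{1}$ at $t=0$ (L'H\^{o}pital, using $\phi'(0)=0$ and $\phi''(0)=-\lambda_{1}\phi(0)/(m-1)$) and is monotone nondecreasing, by standard analysis of the Bessel-type ODE satisfied by $\phi$. Dividing the previous display by $u$ and applying the lemma yields $-\Delta^{M}u\geq\lambda_{1}u$ on $\Omega$, and Barta's inequality for positive supersolutions then gives $\lambda^{*}(\Omega)\geq\lambda_{1}$. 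For bounded $\Omega$, the first eigenfunction $f>0$ of $\Omega$ exists and, under the hypothesis of equality, the maximum principle applied to $f/u$ forces $f=cu$, which in turn pushes every intermediate inequality to an equality; in particular $|T^{\top}|^{2}\equiv1$ and Hessian comparison holds with equality on all of $\Omega$. This rigidity forces $\varphi(\Omega)$ to split as a totally geodesic product of a geodesic ball in a space form with an $\mathbb{R}$-factor, which is incompatible with $\Omega$ being bounded. The main obstacle is the ODE monotonicity lemma, since the rest is bookkeeping once the ambient Hessian is decomposed along the $N$ and $\mathbb{R}$ factors and the $\mathbb{R}$-direction is absorbed into the $|T^{\top}|^{2}\leq 1$ estimate that lowers the effective dimension from $m$ to $m-1$.
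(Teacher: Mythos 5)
Your overall line of attack coincides with the paper's: same test function $u=\phi\circ\rho_{N}\circ\pi_{N}\circ\varphi$, same use of minimality to drop the second–fundamental–form term, same Hessian comparison in the $N$–factor, and the same Barta-type inequality to pass from $-\Delta u\geq\lambda_{1}u$ to $\lambda^{*}(\Omega)\geq\lambda_{1}$. Your bookkeeping is tidier: projecting onto $T^{\top}$ and $\nabla^{M}\eta$ and discarding $|T^{\top}|^{2}\leq 1$ up front replaces the paper's explicit coefficients $a_{i},b_{i},c_{i}^{j}$ and makes it transparent that everything reduces to the single scalar inequality $(m-1)\,\tfrac{C_{\kappa}}{S_{\kappa}}(t)\,|\phi'(t)|\geq\lambda_{1}\phi(t)$ on $(0,r)$. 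The equality/rigidity discussion (forcing $|T^{\top}|\equiv 1$, hence vertical lines $\{x\}\times\mathbb{R}$ in $\varphi(M)$, hence $\Omega$ unbounded) is also the paper's argument. Two small slips: you wrote $h_{\kappa}=C_{\kappa}'/C_{\kappa}$, but both the ODE for $\phi$ and the Hessian comparison you invoke require $h_{\kappa}=C_{\kappa}/S_{\kappa}$; and the strictness discussion should note that you also need $|\nabla^{M}\eta|^{2}\equiv 1$ (from strictness of the ODE lemma away from $t=0$), not only $|T^{\top}|^{2}\equiv 1$.

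The genuine gap is the ODE lemma itself, which you correctly flag as ``the main obstacle'' but then dispatch in one line. The L'H\^opital limit $\lambda_{1}$ at $t=0$ is right, but the assertion that $(m-1)(C_{\kappa}/S_{\kappa})|\phi'|/\phi$ is monotone nondecreasing is stated without any argument and is not the route the paper takes; monotonicity is also a stronger claim than what is needed. The paper proves exactly this inequality as Lemma~2.5, and it is the technical heart of the result: a Sturm/Wronskian comparison of $\phi$ against the explicit auxiliary function $\mu(t)=C_{\kappa}(t)^{\lambda/((m-1)\kappa)}$ (with $\mu(t)=e^{-\lambda t^{2}/(2(m-1))}$ when $\kappa=0$), integrating $\bigl(S_{\kappa}^{m-2}(\phi'\mu-\mu'\phi)\bigr)'$ and checking the sign of the resulting integrand separately for $\kappa<0$, $\kappa=0$, $\kappa>0$. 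Without some such argument, the step ``dividing the previous display by $u$ and applying the lemma'' is unsupported, and the proof is incomplete.
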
Theorem (\ref{thm2}) can be viewed as a version of Theorem (1.10) of \cite{bessa-montenegro2} for product spaces. There Bessa and Montenegro gave eigenvalue estimates for pre-images of geodesic balls in Riemannian manifolds with radial sectional curvature bounded above, here we give lower eigenvalue estimates for pre-images of cylinders in product spaces.
  \begin{remark}Inequality (\ref{sil-1}) is sharp. For if we let  $\varphi : \mathbb{H}^{m-1}(-1)\times \mathbb{R}\hookrightarrow \mathbb{H}^{n}(-1)\times \mathbb{R}$ be given by $\varphi (x, t)=(i(x),t)$, where  $i:\mathbb{H}^{m-1}(-1)\hookrightarrow \mathbb{H}^{n}(-1)$ is a totally geodesic embedding then  for $\Omega=\varphi^{-1}(B_{\mathbb{H}^{n}(-1)}(r)\times \mathbb{R})=B_{\mathbb{H}^{m-1}(-1)}(r)\times \mathbb{R}$ we have \[ \lambda^*(\Omega)= \lambda_1(B_{\mathbb{H}(-1)^{m-1}}(r)).\]
\end{remark}
\begin{cor}Let $\varphi :M\hookrightarrow   \mathbb{R}^{3}$ be a complete minimal surface with $\varphi (M)\subset B_{\mathbb{R}^{2}}(r)\times \mathbb{R}$. Then \begin{equation}\lambda^{\ast}(M)\geq\lambda_{1}(B_{\mathbb{R}^{2}}(r)= \frac{c_{0}}{r^{2}},\end{equation}where $c_{0}$ is the first zero of the $J_{0}$-Bessel function.
\end{cor}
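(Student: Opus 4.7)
The plan is to specialize Theorem \ref{thm2} to the Euclidean product $\mathbb{R}^3 = \mathbb{R}^2 \times \mathbb{R}$ and then carry out the classical Bessel-function computation of $\lambda_1$ of a planar disk. First I would verify the hypotheses. With $N = \mathbb{R}^2$ the radial sectional curvatures are identically zero, so the bound $K_N \leq \kappa$ holds with $\kappa = 0$. Under the convention stated in the theorem, $\pi/(2\sqrt{\kappa}) = \infty$ when $\kappa \leq 0$, and of course $\mathrm{inj}(x_0) = \infty$ for any $x_0 \in \mathbb{R}^2$, so the admissibility condition $r < \min\{\mathrm{inj}(x_0), \pi/(2\sqrt{\kappa})\}$ is automatic for every $r > 0$.

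Next, I would argue that $\Omega = M$ is a legitimate choice. The assumption $\varphi(M) \subset B_{\mathbb{R}^2}(r) \times \mathbb{R}$ gives the set-theoretic equality $M = \varphi^{-1}(B_{\mathbb{R}^2}(r) \times \mathbb{R})$, and because $M$ is a complete connected minimal surface this preimage is itself a (the) connected component. Applying Theorem \ref{thm2} in this setting then yields $\lambda^*(M) \geq \lambda_1(B_{\mathbb{R}^2}(r))$.

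The remaining step is to evaluate $\lambda_1(B_{\mathbb{R}^2}(r))$ explicitly. The plan is to use rotational invariance: the first Dirichlet eigenfunction is radial, so the eigenvalue equation $\Delta u + \lambda u = 0$ reduces in polar coordinates to the Bessel equation $u''(\rho) + \rho^{-1} u'(\rho) + \lambda u(\rho) = 0$. The solution regular at the origin is $u(\rho) = J_0(\sqrt{\lambda}\,\rho)$, and the Dirichlet condition $u(r) = 0$ forces $\sqrt{\lambda}\,r = c_0$, the first positive zero of $J_0$. This gives $\lambda_1(B_{\mathbb{R}^2}(r)) = c_0^2/r^2$, completing the argument. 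No genuine obstacle is anticipated; the corollary is a routine specialization of Theorem \ref{thm2} paired with the standard Bessel computation.
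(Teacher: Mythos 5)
There is a dimension mismatch in your application of Theorem \ref{thm2}, and it is fatal. The theorem gives $\lambda^{\ast}(\Omega) \geq \lambda_{1}(B_{\mathbb{N}^{m-1}(\kappa)}(r))$, where $m$ is the dimension of the immersed manifold $M$, \emph{not} the dimension of the ambient factor $N$. For a minimal \emph{surface} $M$ in $\mathbb{R}^{3}=\mathbb{R}^{2}\times\mathbb{R}$ one has $m=2$ and $\kappa=0$, so $\mathbb{N}^{m-1}(\kappa)=\mathbb{N}^{1}(0)=\mathbb{R}$, and the comparison ball is the one-dimensional interval $B_{\mathbb{R}}(r)=(-r,r)$, not the planar disk $B_{\mathbb{R}^{2}}(r)$. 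You can confirm this by looking at the ODE (\ref{sil01}) in the proof of Theorem \ref{thm2}: the radial coefficient is $(m-2)\,C_{\kappa}/S_{\kappa}$, which for $m=2$ is zero, giving $v''+\lambda v=0$ on $(0,r)$ with $v'(0)=0$, $v(r)=0$; the associated eigenvalue is $\pi^{2}/(4r^{2})$, not the Bessel eigenvalue. The sharpness remark after the theorem makes the same point: for $m=2$ the extremal example is a totally geodesic plane slicing the cylinder in a strip $(-r,r)\times\mathbb{R}$, and $\lambda^{\ast}$ of that strip is exactly $\pi^{2}/(4r^{2})$.

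Thus what the theorem actually delivers, with your (otherwise correct) verification that $\Omega=M$ when $\varphi(M)$ lies inside the cylinder, is
\begin{equation*}
\lambda^{\ast}(M)\;\geq\;\lambda_{1}\bigl(B_{\mathbb{R}}(r)\bigr)\;=\;\frac{\pi^{2}}{4r^{2}},
\end{equation*}
which is strictly smaller than $\lambda_{1}(B_{\mathbb{R}^{2}}(r))=c_{0}^{2}/r^{2}$ (numerically $\pi^{2}/4\approx 2.47$ while $c_{0}^{2}\approx 5.78$). Your Bessel computation of $\lambda_{1}(B_{\mathbb{R}^{2}}(r))$ is itself fine — and in passing you correctly read $c_{0}/r^{2}$ in the statement as a typo for $c_{0}^{2}/r^{2}$ — but the step ``Applying Theorem \ref{thm2}\dots yields $\lambda^{\ast}(M)\geq\lambda_{1}(B_{\mathbb{R}^{2}}(r))$'' is not justified by the theorem. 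In fact the paper's own statement of the corollary appears to contain the same index slip ($m-1=1$ replaced by $n=2$), so your reproduction of it is understandable; but as a deduction from Theorem \ref{thm2} it does not go through, and one would either have to settle for the weaker bound $\pi^{2}/(4r^{2})$ or supply a separate argument, not present in the paper, to reach the two-dimensional eigenvalue.
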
\begin{question} The only examples known (to the best of our knowledge) of complete surfaces in $\mathbb{R}^{3}$  with positive fundamental tone  are the Nadirashvilli bounded minimal surfaces \cite{Nadirashvili} and Martin-Morales cylindrically bounded minimal surfaces \cite{pacomartin}. Both  Nadirashvilli and Martin-Morales minimal surfaces have at least two bounded coordinate functions.  That was crucial in the proof that their fundamental tones were positive.
 That raises the question whether  there are minimal surfaces in $\mathbb{R}^{3}$ with at most one bounded coordinate function with positive fundamental tone. More specifically, has the Jorge-Xavier minimal surface inside the slab \cite{jorge-xavier}  positive fundamental tone?
\end{question}

A second purpose of this paper is study  the fundamental tones of domains in submanifolds of $N\times \mathbb{R}$ with locally bounded  mean curvature. We need a stronger notion of {\em locally bounded mean curvature} than the considered  in \cite{bessa-montenegro1}.
\begin{definition}An isometric immersion $\varphi : M \hookrightarrow W\times \mathbb{R}$ has locally bounded mean curvature $\vert H\vert$
if  for any $p\in W$ and $r>0$, the number
\[h(p,r)=\sup \{\vert H(x)\vert ;\,x\in \varphi (M)\cap (B_{W}(p,r)\times \mathbb{R}) \} \] is finite. Here  $B_{W}(x_{0},r)$ is the geodesic ball of radius $r$ and center $x_{0}$ in $W$.\label{defLB}
\end{definition}
Our second result is  the following  theorem.
\begin{theorem}\label{thm3}Let $\varphi : M \hookrightarrow N\times \mathbb{R}$ be  a complete  immersed $m$-submanifold of $N\times \mathbb{R}$ with locally bounded  mean curvature, where $N$ has radial sectional curvature $K\leq \kappa$,   along the geodesics issuing from  a point  $x_{0}\in N$. Let $\Omega(r)\subset \varphi^{-1}( B_{N}(p,r)\times \mathbb{R})$ be
any connected component. Suppose $r\leq \min\{\inj_{N}(x_{0}), \pi/2\sqrt{\kappa}\}$. In addition \begin{itemize}\item  If $\vert h(x_{0},r)\vert <\Lambda^{2}<\infty$ let
$r\leq  (C_{\kappa}/S_{\kappa})^{-1}(\Lambda^{2}/(m-2)). $ \item If
 $\lim_{r\to \infty}h(x_{0},r)=\infty$ let
  $r\leq  (C_{\kappa}/S_{\kappa})^{-1}(h(x_{0},r_{0})/(m-2)),  $ where $r_{0}$  is so that $(m-2)\frac{C_{\kappa}}{S_{\kappa}}(r_{0})-h(x_{0}, r_{0})=0$.\end{itemize} In both cases we have
    \[\lambda^{\ast}(\OM(r))\geq \left[\frac{(m-2)\displaystyle\frac{C_{\kappa}}{S_{\kappa}}(r)-h(x_{0}, r)}{2}\right]^{2}>0\cdot\]
\end{theorem}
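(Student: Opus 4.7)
The plan is to apply Barta's inequality to a positive test function built from the distance function on $N$. Let $\rho_N=\mathrm{dist}_N(x_0,\cdot)$ and $\pi_N\colon N\times\mathbb{R}\to N$ be the projection, and set $u=\rho_N\circ\pi_N\circ\varphi$. Since $r\le\inj_N(x_0)$ and $r<\pi/2\sqrt\kappa$, the function $u$ is smooth on $\Omega(r)$. Define
\[ f=e^{-\alpha u}\colon\Omega(r)\to\mathbb{R}, \]
where $\alpha>0$ will be chosen to maximize the bound. Then $f$ is smooth and strictly positive on $\Omega(r)$, and Barta's inequality gives $\lambda^{\ast}(\Omega(r))\ge \inf_{\Omega(r)}(-\Delta^M f/f)$.

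The core computation is a pointwise lower bound for $-\Delta^M f/f$. Writing $\bar M=N\times\mathbb{R}$ and using the standard formula for the Laplacian of an ambient function restricted to an isometric immersion,
\[ \frac{-\Delta^M f}{f}=-\alpha^2|\nabla^M u|^2+\alpha\Bigl(\sum_{i=1}^{m}\hess^{\bar M} u(e_i,e_i)+\langle \vec H,\overline\nabla u\rangle\Bigr), \]
where $\{e_i\}$ is an orthonormal frame of $T_pM$ and $\vec H$ is the mean curvature vector of $\varphi$. Because $\partial_t$ is parallel in the product, $\hess^{\bar M}u(X,X)=\hess^{N}\rho_N(X_N,X_N)$, where $X_N$ is the $N$-component of $X$. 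The Hessian comparison theorem on $N$ (valid because $K_N\le\kappa$ and $\rho_N<\pi/2\sqrt\kappa$) gives
\[ \hess^N\rho_N(X_N,X_N)\ge\frac{C_\kappa}{S_\kappa}(\rho_N)\bigl(|X_N|^2-\langle X_N,\nabla^N\rho_N\rangle^2\bigr). \]
Summing over $i$ and using $\sum_i\langle e_i,v\rangle^2\le|v|^2$ applied to the two mutually orthonormal vectors $\partial_t$ and $\nabla^N\rho_N$ in $T\bar M$ costs at most two units, giving the key estimate
\[ \sum_{i=1}^{m}\hess^{\bar M} u(e_i,e_i)\ge (m-2)\frac{C_\kappa}{S_\kappa}(\rho_N). \]

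Combining this with $|\nabla^M u|^2\le 1$, $|\langle\vec H,\overline\nabla u\rangle|\le h(x_0,r)$, and the monotonicity of $C_\kappa/S_\kappa$, we obtain on all of $\Omega(r)$
\[ \frac{-\Delta^M f}{f}\ge -\alpha^2+\alpha\Bigl[(m-2)\frac{C_\kappa}{S_\kappa}(r)-h(x_0,r)\Bigr]. \]
Under either hypothesis on $r$, the bracket is nonnegative: in the first case because $r\le (C_\kappa/S_\kappa)^{-1}(\Lambda^2/(m-2))$ forces $(m-2)(C_\kappa/S_\kappa)(r)\ge\Lambda^2\ge h(x_0,r)$, and in the second case because $r_0$ is by definition the radius at which the bracket vanishes, so for $r\le r_0$ it remains nonnegative. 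Maximizing the right-hand side over $\alpha>0$ at the critical value $\alpha=\tfrac{1}{2}[(m-2)(C_\kappa/S_\kappa)(r)-h(x_0,r)]$ yields the stated bound.

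The main technical obstacle is the Hessian comparison bookkeeping in the product $\bar M$ and its transfer to $M$: one has to verify that exactly two degrees of freedom are lost—one from the parallel direction $\partial_t$ and one from the radial direction $\nabla^N\rho_N$—irrespective of how $M$ sits inside $\bar M$. Once that loss is quantified by the factor $(m-2)$, the rest of the argument is a one-variable quadratic optimization in $\alpha$.
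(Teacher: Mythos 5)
Your proof is correct and matches the paper's approach in all essential respects: the same function $u=\rho_N\circ\pi_N\circ\varphi$, the same decomposition of the tangent frame into radial, vertical, and spherical components, the same two-degree-of-freedom loss producing the $(m-2)$ factor via Hessian comparison, and the same control of the mean curvature term via $h(x_0,r)$. The only difference is packaging: you apply Barta's inequality to the explicit test function $e^{-\alpha u}$ and optimize over $\alpha$, whereas the paper invokes Theorem~\ref{thmBM2} directly with the vector field $X=\grad f$, $f=\tilde{\rho}_N\circ\varphi$, and reads off $\lambda^{\ast}\ge\bigl(\inf\diver X/(2\sup|X|)\bigr)^{2}$; your optimization over $\alpha$ is precisely the algebraic content of that lemma, so the two routes coincide.
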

\begin{cor}[Bessa-Montenegro, \cite{bessa-montenegro3}]Let $\varphi : M \hookrightarrow N\times \mathbb{R}$ be  a compact  immersed submanifold with mean curvature vector $H$. Let $p_{1}:N\times \mathbb{R}\to N$ be the projection on the first factor. Then the extrinsic radius  of $p_{1}(M)$ is given  \[R_{p_{1}(M)}=(\frac{C_{\kappa}}{S_{\kappa}})^{-1}(\sup_{M} \vert H\vert/(m-2)) .\]\label{cor1}
\end{cor}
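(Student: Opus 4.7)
The plan is to apply Theorem \ref{thm3} to $\Omega = M$ itself, exploiting the fact that a compact closed manifold has vanishing fundamental tone. Since $M$ is compact without boundary, the constant functions are test functions in $H_0^1(M) = H^1(M)$ with zero Rayleigh quotient, so $\lambda^{\ast}(M) = 0$. Also, compactness of $\varphi(M)$ makes $\sup_M |H|$ finite, and in particular $h(x_0, r) \leq \sup_M |H|$ for every $x_0 \in N$ and every $r > 0$, so $\varphi$ has locally bounded mean curvature in the sense of Definition \ref{defLB}.

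Next, I would set $R := R_{p_1(M)}$ and choose a center $x_0 \in N$ realizing the enclosing ball, so $p_1(M) \subset \overline{B_N(x_0, R)}$. For any $r > R$ (and small enough that $r \leq \min\{\inj_N(x_0), \pi/(2\sqrt{\kappa})\}$, which is automatic once $R$ is in the admissible range), the immersion satisfies $\varphi(M) \subset B_N(x_0, r) \times \mathbb{R}$, and hence the whole of $M$ is a connected component $\Omega(r)$ of $\varphi^{-1}(B_N(x_0, r) \times \mathbb{R})$.

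The crux is then a proof by contradiction against the conclusion of Theorem \ref{thm3}. If the quantitative hypothesis
\[ (m-2)\frac{C_{\kappa}}{S_{\kappa}}(r) - h(x_0, r) > 0 \]
were to hold at some such $r$, Theorem \ref{thm3} would yield $\lambda^{\ast}(M) > 0$, contradicting $\lambda^{\ast}(M) = 0$. Hence for every admissible $r > R$ one must have
\[ \frac{C_{\kappa}}{S_{\kappa}}(r) \;\leq\; \frac{h(x_0, r)}{m-2} \;\leq\; \frac{\sup_M |H|}{m-2}. \]
Since $C_\kappa/S_\kappa$ is strictly decreasing on $(0, \pi/(2\sqrt\kappa))$, inverting gives $r \geq (C_\kappa/S_\kappa)^{-1}(\sup_M |H|/(m-2))$, and letting $r \searrow R$ produces the desired lower bound on $R_{p_1(M)}$.

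The main obstacle I anticipate is a cleanly chosen $x_0$ with $r$ admissible (i.e.\ $R$ lies under the injectivity radius and under $\pi/(2\sqrt{\kappa})$); for the statement to be meaningful one uses that the quantity $(C_\kappa/S_\kappa)^{-1}(\sup_M|H|/(m-2))$ is already in this admissible range, and the existence of $x_0$ realizing the enclosing ball follows from compactness of $p_1(\varphi(M))$. The matching upper bound (to obtain equality as stated) is obtained from standard examples (e.g.\ geodesic spheres in $N$) where the estimate is sharp, or by direct reference to the Bessa–Montenegro work \cite{bessa-montenegro3}; the fundamental-tone argument above is what supplies the nontrivial inequality.
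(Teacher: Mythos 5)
Your argument is essentially the paper's own (one-line) proof, just spelled out: apply Theorem~\ref{thm3} to $\Omega(R_{p_1(M)})=M$ and use $\lambda^{\ast}(M)=0$ for the closed compact manifold $M$ to force the hypothesis $(m-2)\frac{C_\kappa}{S_\kappa}(r)-h(x_0,r)>0$ of Theorem~\ref{thm3} to fail, which yields the lower bound on the extrinsic radius after inverting the decreasing function $C_\kappa/S_\kappa$. You are also right to flag that this argument only delivers the inequality $R_{p_1(M)}\geq (C_\kappa/S_\kappa)^{-1}(\sup_M|H|/(m-2))$; the paper's proof has the same limitation, and the ``$=$'' in the corollary's statement should be read as ``$\geq$'' (an inheritance from \cite{bessa-montenegro3}), so this is not a gap in your reasoning.
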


\section{ Preliminaries}

Let $\varphi : M \hookrightarrow W$ be an isometric immersion, where
$M$ and $W$ are complete Riemannian manifolds of dimension
 $m$ and $n$  respectively. Consider a smooth function $g:W \rightarrow \mathbb{R}$ and the composition $f=g\,\circ\,
 \varphi :M \rightarrow \mathbb{R}$. Identifying $X$ with $d\varphi (X)$ we have at $q\in M$ that
 the Hessian of $f$ is given by
 \begin{equation}\hess\,f (q)  \,(X,Y)= \hess\,g (\varphi (q))\,(X,Y) +
 \langle \grad\,g\,,\,\alpha (X,Y)\rangle_{\varphi (q)}.
\label{eqBF2}
\end{equation}
 Taking the trace in (\ref{eqBF2}), with respect to an orthonormal basis $\{ e_{1},\ldots e_{m}\}$
 for $T_{q}M$, we have the Laplacian of $f$,
\begin{eqnarray}
\Delta \,f (q) & = & \sum_{i=1}^{m}\hess\,g (\varphi
(q))\,(e_{i},e_{i}) + \langle \grad\,g\,,\, \sum_{i=1}^{m}\alpha
(e_{i},e_{i})\rangle.\label{eqBF3}
\end{eqnarray}
The formulas (\ref{eqBF2}) and (\ref{eqBF3}) are  well known in
the literature,
 see  \cite{jorge-koutrofiotis}.
\vspace{2mm}
\noindent For the proof of Theorems (\ref{thm2}) and (\ref{thm3}) we will need few  preliminaries results.
 The first result we need is the Hessian Comparison Theorem, one can see \cite{kn:s-y} for a proof.
\begin{theorem}[Hessian Comparison Theorem]Let $W$ be a
complete Riemannian manifold and let $\rho
$ be the  distance function on $W$  to $x_{0}$. Let
 $\gamma$ be a minimizing geodesic starting at $x_{0}$  and suppose that the radial sectional curvatures  of $M$ along $\gamma$ is bounded above $K_{\gamma}\leq \kappa$. Then  the Hessian  of  $\rho$ at $\gamma(t)$ satisfies

\begin{equation}
Hess\,\rho(\gamma (t))(X,X)\geq \displaystyle \frac{C_{\kappa}}{S_{\kappa}}(t)\cdot\Vert
X\Vert^{2},
\,\,\,\,\, X\perp \gamma'(t)
\label{eqBF6}
\end{equation}
Where  \label{thmHess}
\begin{equation}S_{\kappa}(t)=\left\{
\begin{array}{lll}\displaystyle \sin(\sqrt{\kappa} \cdot t)/\sqrt{\kappa} & if & \kappa >0\\
&&\\
\displaystyle1/t &if & \kappa =0\\
&&\\
\displaystyle\sinh(\sqrt{-\kappa}\cdot t)/\sqrt{-\kappa} &if & \kappa <0
\end{array}\right.
\end{equation}and  $C_{\kappa}(t)=S'_{\kappa}(t)$,
\end{theorem}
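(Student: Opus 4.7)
The plan is to reduce the tensorial inequality to a scalar Sturm--Riccati comparison for the length of a suitable Jacobi field along $\gamma$, and then propagate a model comparison from $t=0$. Fix $t_{0}>0$ with $\gamma(t_{0})$ inside a normal neighborhood of $x_{0}$, and pick $X\in T_{\gamma(t_{0})}W$ with $X\perp\gamma'(t_{0})$; the distance $\rho$ is then smooth at $\gamma(t_{0})$ and $\grad\rho(\gamma(t))=\gamma'(t)$ along the minimizing geodesic.

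First, I would identify $\hess\rho(\gamma(t_{0}))(X,X)$ with Jacobi-field data. Consider the geodesic variation $\alpha(s,t)=\exp_{x_{0}}(t\,v(s))$ with $v(0)=\gamma'(0)$ and $\partial_{s}\alpha(0,t_{0})=X$, and set $J(t)=\partial_{s}\alpha(0,t)$. Then $J$ is a Jacobi field along $\gamma$ with $J(0)=0$, $J(t_{0})=X$, and $J\perp\gamma'$ throughout $[0,t_{0}]$. The symmetry $\nabla_{s}\partial_{t}\alpha=\nabla_{t}\partial_{s}\alpha$ evaluated at $(0,t_{0})$ gives $\nabla_{X}\grad\rho=J'(t_{0})$, whence
\[
\hess\rho(\gamma(t_{0}))(X,X)=\langle J'(t_{0}),X\rangle = \phi(t_{0})\phi'(t_{0}),
\]
where $\phi(t):=|J(t)|$ is strictly positive on $(0,t_{0}]$ because $t_{0}$ lies below the first conjugate time of $\gamma$.

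Next, I would derive the Sturm inequality $\phi''+\kappa\phi\geq 0$. Differentiating $\phi^{2}=\langle J,J\rangle$ twice, invoking the Jacobi equation $J''=-R(J,\gamma')\gamma'$, and applying the Cauchy--Schwarz bound $|J|^{2}|J'|^{2}\geq \langle J,J'\rangle^{2}$ yields
\[
\phi\phi''=\langle J'',J\rangle+|J'|^{2}-(\phi')^{2}\geq -K(\gamma',J/\phi)\,\phi^{2},
\]
and the hypothesis $K_{\gamma}\leq\kappa$ gives $\phi''+\kappa\phi\geq 0$. The model function $S_{\kappa}$ solves $S_{\kappa}''+\kappa S_{\kappa}=0$ with matching boundary behavior $S_{\kappa}(0)=0$, $S_{\kappa}'(0)=1$.

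Finally, I would run the log-derivative comparison. Writing $u=\phi'/\phi$ and $u_{\kappa}=C_{\kappa}/S_{\kappa}$, the corresponding Riccati forms are $u'\geq -\kappa-u^{2}$ and $u_{\kappa}'=-\kappa-u_{\kappa}^{2}$, so $w:=u-u_{\kappa}$ obeys $w'+(u+u_{\kappa})w\geq 0$, and an integrating factor shows that $w(t)\exp\!\int_{\varepsilon}^{t}(u+u_{\kappa})$ is non-decreasing on $(0,t_{0}]$. The Taylor expansion $\phi(t)=t-\tfrac{K_{0}}{6}t^{3}+O(t^{5})$, with $K_{0}$ the sectional curvature at $x_{0}$ in the plane spanned by $\gamma'(0)$ and $J'(0)$, gives $w(t)=\tfrac{\kappa-K_{0}}{3}t+O(t^{3})\geq 0$ for small $t$, hence $w(t_{0})\geq 0$, i.e.\ $\phi'(t_{0})/\phi(t_{0})\geq C_{\kappa}(t_{0})/S_{\kappa}(t_{0})$. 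Multiplying by $|X|^{2}=\phi(t_{0})^{2}$ recovers the stated inequality. The main delicate step is this matching of singular initial data at $t=0$: both $u$ and $u_{\kappa}$ blow up like $1/t$, and the sign of their difference is detected only at the next order in the expansion of $\phi$, which is precisely where the hypothesis $K_{\gamma}\leq\kappa$ first enters.
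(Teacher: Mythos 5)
The paper itself offers no proof of this statement --- it is quoted from Schoen--Yau \cite{kn:s-y} --- so the comparison is with the standard literature argument, which is essentially what you reproduce: reduce to a Jacobi field $J$ along $\gamma$ with $J(0)=0$, $J(t_0)=X$, identify $\hess\,\rho(X,X)=\langle J'(t_0),J(t_0)\rangle=\phi(t_0)\phi'(t_0)$ with $\phi=|J|$, prove the Sturm inequality $\phi''+\kappa\phi\ge 0$, and compare with $S_\kappa$. Those steps are correct, granted two harmless implicit normalizations you should state: the variation must run through unit vectors ($|v(s)|=1$), so that $\rho(\alpha(s,t))=t$ and $\partial_t\alpha=\grad\rho$ along nearby radial geodesics inside the normal ball, and the expansion $\phi(t)=t-\tfrac{K_0}{6}t^3+\dots$ presumes $|J'(0)|=1$ (irrelevant for the scale-invariant ratio $u=\phi'/\phi$).

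The one genuine gap is precisely at the step you flag as delicate. Your expansion gives $w(t)=u(t)-u_\kappa(t)=\tfrac{\kappa-K_0}{3}\,t+O(t^2)$ (the remainder is only $O(t^2)$ in general, since $\phi$ may carry a $t^4$ term), and you infer $w\ge 0$ for small $t$ from the sign of the leading coefficient. This works when $K_0<\kappa$, but when $K_0=\kappa$ --- e.g.\ constant curvature, or any metric attaining the bound at $x_0$ --- the leading coefficient vanishes and the expansion does not control the sign of $w$ near $0$, so the argument as written does not close. The repair is already contained in your own integrating factor: with $h(t)=w(t)\exp\int_\varepsilon^t(u+u_\kappa)$, you have $w(t)=O(t)\to 0$ and $\int_\varepsilon^t(u+u_\kappa)\to-\infty$ as $t\to 0^+$ (both $u$ and $u_\kappa$ blow up like $1/t$), hence $h(t)\to 0$; since $h$ is non-decreasing, $h\ge 0$ on $(0,t_0]$ and therefore $w(t_0)\ge 0$, with no case distinction on $K_0$. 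Cleaner still, you can bypass the singular Riccati matching with a Wronskian (Sturm) argument: $(S_\kappa\phi'-C_\kappa\phi)'=S_\kappa(\phi''+\kappa\phi)\ge 0$ and $S_\kappa\phi'-C_\kappa\phi\to 0$ as $t\to 0^+$, so $S_\kappa\phi'-C_\kappa\phi\ge 0$ wherever $S_\kappa>0$ (guaranteed here since $t_0<\pi/\sqrt{\kappa}$ when $\kappa>0$), which is exactly $\phi'/\phi\ge C_\kappa/S_\kappa$; this is the same device the paper uses in the proof of Lemma \ref{wolverine}.
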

\noindent
The second and third results we need are eigenvalue estimates proved in  \cite{bessa-montenegro2} and in \cite{bessa-montenegro1}. The former is  a generalization of the well known Barta's eigenvalue theorem \cite{barta} and the later is a generalization of a result of Cheung-Leung \cite{cheung-leung}.
\begin{theorem}[\cite{bessa-montenegro2}]\label{thmBM1}Let
$\OM$ be an open set in  a Riemannian manifold $M$. Then
\begin{equation}\label{eqThmP1}
 \lambda^{\ast}(\OM)\geq
 \sup_{{\mathcal C}(\OM) }\{\inf_{ \OM} (\diver X-
\vert X \vert^{2})\}. \end{equation}Where ${\mathcal C}(\OM)$ is the set of smooth vector fields  in $\OM\setminus F$  for some  closed set  $F$
with Hausdorff measure $\mathcal{H}^{n-1}(F\cap \Omega)=0$.
\end{theorem}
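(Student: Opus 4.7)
The plan is to prove the generalized Barta inequality by choosing an arbitrary test vector field $X\in\mathcal{C}(\Omega)$ and an arbitrary test function $f\in H^1_0(\Omega)\setminus\{0\}$ and then estimating the Rayleigh quotient $\int_\Omega|\nabla f|^2/\int_\Omega f^2$ from below by $\inf_\Omega(\diver X-|X|^2)$; taking sup over $X$ and inf over $f$ then yields the stated inequality. The starting point is the pointwise identity
\[
\diver(f^{2}X)=f^{2}\,\diver X+2f\,\langle \nabla f,X\rangle
\]
valid on $\Omega\setminus F$.

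First I would integrate this identity over $\Omega$. For $f\in C^\infty_c(\Omega)$ the left side contributes nothing at the manifold boundary because $f$ has compact support, so the only possible source of boundary terms is the singular locus $F$. Since $\mathcal{H}^{n-1}(F\cap\Omega)=0$, I would exhaust $\Omega$ by open sets $\Omega\setminus F_\varepsilon$ (where $F_\varepsilon$ is an $\varepsilon$-neighborhood of $F$) and argue that the flux $\int_{\partial F_\varepsilon}f^{2}\langle X,\nu\rangle$ vanishes in the limit, using the Hausdorff-measure hypothesis together with the smoothness of $f$ and the local integrability of $X$ off $F$. Once integration by parts is justified, one obtains
\[
\int_\Omega f^{2}\,\diver X=-2\int_\Omega f\,\langle \nabla f,X\rangle.
\]
Applying Cauchy--Schwarz and the elementary inequality $2ab\leq a^{2}+b^{2}$ to the right-hand side with $a=|\nabla f|$ and $b=|f||X|$ gives
\[
\int_\Omega f^{2}\,\diver X\leq \int_\Omega|\nabla f|^{2}+\int_\Omega f^{2}|X|^{2},
\]
i.e.\ $\int_\Omega f^{2}(\diver X-|X|^{2})\leq \int_\Omega|\nabla f|^{2}$. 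Bounding the left-hand integrand below by $\inf_\Omega(\diver X-|X|^{2})$, dividing by $\int_\Omega f^{2}$, and finally taking the infimum over admissible $f$ together with the supremum over $X\in\mathcal{C}(\Omega)$ yields the claim.

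The main obstacle, and the reason the hypothesis on $F$ appears at all, is the rigorous justification of the integration by parts across the singular set $F$: one must control the boundary term on the tube $\partial F_\varepsilon$ even though $X$ is allowed to blow up or be merely smooth on $\Omega\setminus F$. The vanishing of $\mathcal{H}^{n-1}(F\cap\Omega)$ is exactly what is needed to show that $F$ has zero $H^1$-capacity, so that $C^\infty_c(\Omega)$ can be replaced by $C^\infty_c(\Omega\setminus F)$ in a density argument and the tubular flux can be absorbed into a quantity tending to zero; everything after that reduces to Cauchy--Schwarz and is essentially the same computation that underlies the classical Barta theorem (recovered by choosing $X=\nabla\log u$ for a positive supersolution $u$).
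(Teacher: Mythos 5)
The paper itself does not prove this statement; it is cited verbatim from Bessa--Montenegro \cite{bessa-montenegro2}, so there is no in-paper proof to compare against. That said, your integration-by-parts argument is indeed the standard route and is essentially the argument used in the referenced paper: write $\diver(f^{2}X)=f^{2}\diver X+2f\langle\nabla f,X\rangle$, integrate, apply Cauchy--Schwarz with $2ab\le a^{2}+b^{2}$ to obtain $\int_{\Omega}f^{2}(\diver X-|X|^{2})\le\int_{\Omega}|\nabla f|^{2}$, bound the left integrand below by its infimum, and pass to the infimum over test functions and the supremum over vector fields.

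The one point that needs to be flagged is your proposed justification for getting across the singular set $F$. Your first suggestion, the tubular exhaustion $\Omega\setminus F_{\varepsilon}$ with a coarea argument to kill $\int_{\partial F_{\varepsilon}}f^{2}\langle X,\nu\rangle$, is the right mechanism, but note that it needs more than ``smoothness of $f$ and local integrability of $X$ off $F$''; one needs some control on $X$ \emph{across} $F$ (e.g.\ local boundedness or an $L^{1}_{\mathrm{loc}}$ bound on $\Omega$, which holds in all of the paper's applications) so that the flux on $\partial F_{\varepsilon_j}$ actually vanishes along a sequence with $\mathcal{H}^{n-1}(\partial F_{\varepsilon_j})\to 0$. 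Your second, ``cleaner'' suggestion, that $\mathcal{H}^{n-1}(F\cap\Omega)=0$ forces $F$ to have zero $H^{1}$-capacity so that $C^{\infty}_{c}(\Omega\setminus F)$ is $H^{1}$-dense in $H^{1}_{0}(\Omega)$, is not correct. Vanishing $W^{1,2}$-capacity in an $n$-manifold requires, roughly, $\sigma$-finiteness of $\mathcal{H}^{n-2}$ on $F$ (equivalently $\dim_{H}F\le n-2$), whereas a closed set with $\mathcal{H}^{n-1}(F)=0$ may have Hausdorff dimension anywhere up to $n-1$ and therefore positive $2$-capacity. Concretely, the gradient of a cutoff vanishing on an $\varepsilon$-tube of $F$ has $L^{2}$-norm of order $\mathrm{Vol}(F_{\varepsilon})/\varepsilon^{2}$, and $\mathcal{H}^{n-1}(F)=0$ only gives $\mathrm{Vol}(F_{\varepsilon})=o(\varepsilon)$, not $o(\varepsilon^{2})$. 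So the capacity route does not close the gap, and the tube argument with the implicit integrability of $X$ is what one must lean on. With that correction the proposal is in line with the cited proof.
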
\noindent

 \begin{theorem}[\cite{bessa-montenegro1}]\label{thmBM2}Let
$\OM$ be an open set in  a Riemannian manifold $M$ and $c(\OM)$ a constant defined by \[c(\OM)=\sup_{ \mathcal{C}_{+}(\Omega)}\frac{(\inf_{\Omega} \diver X)}{\sup_{\Omega} \vert X\vert^{2}},\]where  $\mathcal{C}_{+}(\Omega)=\{X\in \mathcal{C}(\OM)\: \inf_{\OM}\diver X>0\, {\rm and}\, \sup_{\OM}\vert X\vert <\infty\}$. Then
\begin{equation}\label{eqThmP2}
 \lambda^{\ast}(\Omega )\geq \frac{c(\OM)^{2}}{4}
 \end{equation}
\end{theorem}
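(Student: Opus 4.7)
The plan is to deduce the corollary from Theorem \ref{thm3} via a short contradiction argument, exploiting the well-known fact that a closed (compact without boundary) Riemannian manifold has zero fundamental tone.

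Let $R = R_{p_{1}(M)}$ denote the extrinsic radius of $p_{1}(M) \subset N$, and let $x_{0} \in N$ be a center so that $p_{1}(M) \subset B_{N}(x_{0},R)$. Then $\varphi(M) \subset B_{N}(x_{0},R) \times \mathbb{R}$, so one may take the connected component $\Omega(R)$ in Theorem \ref{thm3} to be all of $M$. Since $M$ is compact, $\sup_{M}\vert H \vert < \infty$, so the immersion is automatically of locally bounded mean curvature in the sense of Definition \ref{defLB}, and moreover $h(x_{0},R) \leq \sup_{M}\vert H \vert$. Provided $R$ lies within the range permitted by the hypotheses on $\inj_{N}(x_{0})$ and $\pi/2\sqrt{\kappa}$, Theorem \ref{thm3} applies.

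The key observation is that if $M$ is compact (closed), then $\lambda^{\ast}(M) = 0$: constant functions lie in $H^{1}_{0}(M) = H^{1}(M)$ with zero Dirichlet energy. The corollary is thus proved by contradiction. Suppose
\[
R < \left(\frac{C_{\kappa}}{S_{\kappa}}\right)^{-1}\!\left(\frac{\sup_{M}\vert H \vert}{m-2}\right),
\]
equivalently $(m-2)(C_{\kappa}/S_{\kappa})(R) > \sup_{M}\vert H \vert \geq h(x_{0},R)$. Then Theorem \ref{thm3} gives
\[
0 = \lambda^{\ast}(M) = \lambda^{\ast}(\Omega(R)) \;\geq\; \left[\frac{(m-2)(C_{\kappa}/S_{\kappa})(R) - h(x_{0},R)}{2}\right]^{2} \;>\; 0,
\]
a contradiction. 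Hence $R \geq (C_{\kappa}/S_{\kappa})^{-1}(\sup_{M}\vert H \vert/(m-2))$, which is the claimed lower bound on the extrinsic radius.

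The only delicate point I expect is the verification that $R$ automatically meets the admissibility constraints $R \leq \min\{\inj_{N}(x_{0}), \pi/2\sqrt{\kappa}\}$ and the smallness condition on $(C_{\kappa}/S_{\kappa})^{-1}$ appearing in Theorem \ref{thm3}. These are tacitly ensured by choosing the center $x_{0}$ appropriately (one may assume the injectivity radius bound as a background hypothesis, consistent with the framing of Theorem \ref{thm3}) and by the monotonicity of $C_{\kappa}/S_{\kappa}$, which makes the contradiction regime of $R$ lie inside the admissible range. No further computation is needed beyond invoking Theorem \ref{thm3}.
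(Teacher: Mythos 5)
Your proposal does not prove the statement at hand. The statement is Theorem \ref{thmBM2}, a general lower bound for the fundamental tone of an arbitrary open set $\OM$ in a Riemannian manifold: $\lambda^{\ast}(\OM)\geq c(\OM)^{2}/4$, where $c(\OM)$ is built from vector fields $X$ with $\inf_{\OM}\diver X>0$ and $\sup_{\OM}\vert X\vert<\infty$. It involves no compactness assumption, no mean curvature, no product structure $N\times\mathbb{R}$, and no extrinsic radius. What you have written instead is a proof of Corollary \ref{cor1} (the extrinsic radius estimate for compact submanifolds of $N\times\mathbb{R}$), deduced from Theorem \ref{thm3} together with the fact that $\lambda^{\ast}(M)=0$ for a closed manifold --- which is essentially the one-line argument the paper itself gives for that corollary, but it is a different statement entirely. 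Moreover, the logical order matters: in the paper Theorem \ref{thmBM2} is an input to the proof of Theorem \ref{thm3}, so no argument that starts from Theorem \ref{thm3} could legitimately establish Theorem \ref{thmBM2}; it would be circular.

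What is actually needed (this is the Bessa--Montenegro argument from \cite{bessa-montenegro1}, generalizing Cheung--Leung \cite{cheung-leung}) is an integration-by-parts estimate: for $f\in C_{0}^{\infty}(\OM)$ and an admissible field $X$,
\begin{equation*}
\Bigl(\inf_{\OM}\diver X\Bigr)\int_{\OM}f^{2}\;\leq\;\int_{\OM}f^{2}\,\diver X\;=\;-2\int_{\OM}f\,\langle \grad f, X\rangle\;\leq\;2\,\sup_{\OM}\vert X\vert\,\Bigl(\int_{\OM}f^{2}\Bigr)^{1/2}\Bigl(\int_{\OM}\vert \grad f\vert^{2}\Bigr)^{1/2},
\end{equation*}
using the divergence theorem applied to $f^{2}X$ and Cauchy--Schwarz; dividing and squaring yields $\int_{\OM}\vert\grad f\vert^{2}\geq \bigl(\inf_{\OM}\diver X/(2\sup_{\OM}\vert X\vert)\bigr)^{2}\int_{\OM}f^{2}$, and taking the infimum over $f$ and the supremum over admissible $X$ gives the claim (with the usual care about $X$ being allowed to be singular on a set $F$ of codimension-one Hausdorff measure zero, and about density of $C_{0}^{\infty}(\OM)$ in $H_{0}^{1}(\OM)$). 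None of this appears in your proposal, so as a proof of Theorem \ref{thmBM2} it is a complete miss rather than a repairable gap.
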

\noindent Finally, we need the following technical lemma.
\begin{lemma}\label{wolverine}
Let $v: B_{\mathbb{N}^n(\kappa)}(r) \rightarrow \mathbb{R}$ be a
first positive eigenfunction of  $B_{\mathbb{N}^n(\kappa)}(r)\subset \mathbb{N}^{n}(\kappa)$ associated to the first eigenvalue
$\lambda_1(B_{\mathbb{N}^n(\kappa)}(r))$. Then
\begin{equation}\label{wolverine2}
n\frac{C_{\kappa(t)}}{S_{\kappa}(t)} v'(t)+
\lambda_1(B_{\mathbb{N}^n(\kappa)}(r))v(t) < 0, \,\,\,t \in(0,r).
\end{equation}
\end{lemma}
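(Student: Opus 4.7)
The plan is to reduce the claim to a scalar ODE analysis in $t$. By the symmetry of the geodesic ball in the space form $\mathbb{N}^n(\kappa)$, the ground state $v$ is radial; writing $v(x) = v(\rho(x))$, it satisfies
\[v'' + (n-1)\frac{C_\kappa}{S_\kappa}\,v' + \lambda_1 v = 0,\qquad v'(0)=0,\qquad v(r)=0,\]
with $v > 0$ on $[0, r)$ and $v'(t) < 0$ on $(0, r)$. Set $\phi(t) := n\frac{C_\kappa}{S_\kappa}(t)\,v'(t) + \lambda_1 v(t)$; the goal is to prove $\phi < 0$ on $(0, r)$.

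First I would compute the limit $\phi(0^+)$. Since $\frac{C_\kappa}{S_\kappa}(t) \sim 1/t$ and $v'(t) \sim v''(0)\,t$ near the origin, one has $n\frac{C_\kappa}{S_\kappa}(t)\,v'(t) \to n v''(0)$, and evaluating the ODE at $t=0$ yields $n v''(0) + \lambda_1 v(0) = 0$, so $\phi(0^+) = 0$. Pushing the Taylor expansion one step further, the ODE fixes $v^{(4)}(0)$ in terms of $v''(0)$, and a direct computation gives
\[\phi(t) = \frac{\lambda_1 - n\kappa}{n+2}\,v''(0)\,t^2 + O(t^4).\]
Since $v''(0) = -\lambda_1 v(0)/n < 0$ and $\lambda_1 > n\kappa$ (automatic for $\kappa \leq 0$, and true by domain monotonicity when $\kappa > 0$ and $r < \pi/(2\sqrt{\kappa})$, as $B(r)$ is strictly contained in a hemisphere whose first eigenvalue equals $n\kappa$), the leading coefficient is negative, so $\phi(t) < 0$ on some interval $(0, \varepsilon)$.

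The core step is to rule out a first return of $\phi$ to zero before $t = r$. Suppose, for contradiction, that $t_0 \in (0, r)$ is the smallest point with $\phi(t_0) = 0$; then $\phi'(t_0) \geq 0$. Differentiating $\phi$ and using the identity $\left(\frac{C_\kappa}{S_\kappa}\right)' = -\kappa - \left(\frac{C_\kappa}{S_\kappa}\right)^{\!2}$ together with the ODE to eliminate $v''$ gives
\[\phi'(t) = v'(t)(\lambda_1 - n\kappa) - n^2\left(\frac{C_\kappa}{S_\kappa}\right)^{\!2}\! v'(t) - n\frac{C_\kappa}{S_\kappa}\,\lambda_1 v(t).\]
At $t_0$, the condition $\phi(t_0) = 0$ reads $n\frac{C_\kappa}{S_\kappa}(t_0)\,v'(t_0) = -\lambda_1 v(t_0)$; multiplying by $n\frac{C_\kappa}{S_\kappa}(t_0)$ shows that the last two terms cancel exactly, leaving $\phi'(t_0) = v'(t_0)(\lambda_1 - n\kappa) < 0$, contradicting $\phi'(t_0) \geq 0$. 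The main obstacle I expect is spotting this cancellation and recognizing $\lambda_1 - n\kappa$ as the sign-determining quantity; once that identity is isolated, the rest is a routine continuation argument.
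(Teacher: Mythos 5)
Your proof is correct, and it takes a genuinely different route from the paper's. The paper constructs, separately for each sign of $\kappa$, an explicit comparison function $\mu$ (namely $C_\kappa(t)^{\lambda/(n\kappa)}$, its reciprocal power, or $e^{-\lambda t^2/(2n)}$), rewrites the target quantity as a multiple of the Wronskian $v'\mu - \mu'v$, and shows this Wronskian is negative by a Sturm--Picone-type integration-by-parts of the pair of self-adjoint ODEs satisfied by $v$ and $\mu$. You instead work directly with $\phi(t) = n\tfrac{C_\kappa}{S_\kappa}v' + \lambda_1 v$: you establish $\phi(0^+)=0$ and $\phi<0$ near $0$ via a two-term Taylor expansion, then rule out a return to zero by showing that $\phi(t_0)=0$ forces $\phi'(t_0) = (\lambda_1 - n\kappa)\,v'(t_0) < 0$. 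I checked the key computations --- $(\tfrac{C_\kappa}{S_\kappa})' = -\kappa - (\tfrac{C_\kappa}{S_\kappa})^2$, the exact cancellation at $t_0$, and the coefficient $\tfrac{\lambda_1-n\kappa}{n+2}v''(0)$ in the Taylor expansion --- and they are all right. The trade-offs: your argument is uniform in $\kappa$ and avoids guessing a comparison function, but it requires the extra spectral input $\lambda_1 > n\kappa$, which is automatic for $\kappa\le 0$ and, for $\kappa>0$, follows from strict domain monotonicity against the open hemisphere (whose first Dirichlet eigenvalue is $n\kappa$, realized by $\cos(\sqrt\kappa\,\rho)$); this is exactly where the hypothesis $r < \pi/(2\sqrt\kappa)$ enters your proof, whereas in the paper's it enters only to keep $C_\kappa>0$ so that $\mu$ is defined. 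The paper's route dispenses with any eigenvalue lower bound at the cost of a case-by-case construction and a heavier integration-by-parts bookkeeping. Both are complete; yours is arguably shorter and more transparent once the cancellation at $t_0$ is spotted.
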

\begin{proof}We are going to treat the cases $\kappa <0$, $\kappa =0$ and $\kappa>0$ separately. Suppose first that $\kappa <0$ and let us call $\lambda=\lambda_1(B_{\mathbb{N}^n(\kappa)}(r))$ for simplicity of notation. Recall that $v(t)$ satisfies the following differential equation.
\begin{equation} v''(t)+(n-1)\frac{C_{\kappa}}{S_{\kappa}}(t)v'(t)+\lambda v(t)=0, \,\,\,t \in(0,r)\label{eqSilvana1}\end{equation}
Consider the  function $ \mu(t)
=C_{\kappa}(t)^{\displaystyle\frac{\lambda}{n\kappa}} $. Thus
$
\mu'(t) = -\displaystyle\frac{\lambda}{n}\; S_{\kappa}(t)
\;C_{\kappa}(t)^{\displaystyle\frac{\lambda}{n\kappa} -1}
$
and
\begin{equation}\label{eqpacelli2}
\begin{array}{ccl}
v'(t)\mu(t) - \mu'(t)v(t) &=& v'(t)\;C_{\kappa}(t)^{\displaystyle\frac{\lambda}{nk}} + \displaystyle\frac{\lambda}{n} S_{\kappa}(t)\;C_{\kappa}(t)^{\displaystyle\frac{\lambda}{n\kappa}-1}v(t)\\
&&\\
&=&\displaystyle\frac{1}{n}\;C_{\kappa}(t)^{\displaystyle\frac{\lambda}{n\kappa}-1}\;S_{\kappa}(t)
\left( \displaystyle n\frac{C_{\kappa}(t)}{S_{\kappa}(t)}v'(t) + \lambda
v(t)\right)
\end{array}
\end{equation}From (\ref{eqpacelli2}) we see that to prove that $
n\displaystyle\frac{C_{\kappa}(t)}{S_{\kappa}(t)}v'(t) + \lambda v(t) <0
$ we only  need to prove
\[v'(t)\mu(t) - \mu'(t)v(t) < 0.\]

\vspace{2mm}
\noindent Multiplying the equation (\ref{eqSilvana1}) by $S_{\kappa}^{n-1}$,  we obtain the following differential equation
\begin{equation}\label{eqSilvana2}
\left(S_{\kappa}^{n-1}\;v'\right)'(t) +
\lambda S_{\kappa}^{n-1}(t)\;v(t)=0, \,\,\,t \in(0,r).
\end{equation}
The function $\mu(t) =
C_{\kappa}(t)^{\displaystyle\frac{\lambda}{n\kappa}}$, satisfies the differential equation
\begin{equation}\label{eqSilvana3}
\begin{array}{ccl}
\mu''(t)& =& - \displaystyle\lambda\left( \displaystyle\frac{1}{nC_{\kappa}^{2}(t)}
-\displaystyle\frac{\lambda}{n^2}\displaystyle\frac{S_{\kappa}^{2}(t)}{C_{\kappa}^{2}(t)}\right)
\mu(t).\end{array}
\end{equation}
Multiplying the equation (\ref{eqSilvana3}) by $S_{\kappa}^{n-1}(t)$ we  obtain
\begin{equation}
\begin{array}{ccl}
S_{\kappa}^{n-1}(t)\mu''(t) +  \displaystyle\lambda S_{\kappa}^{n-1}(t)\left(
\displaystyle\frac{1}{n\,C_{\kappa}^{2}(t)}
-\displaystyle\frac{\lambda}{n^2}\frac{S_{\kappa}^{2}(t)}{C_{\kappa}^{2}(t)}\right)
\mu(t) = 0.
\end{array}
\end{equation}
Adding and subtracting the term $(n-1)\mu'(t)S_{\kappa}^{n-2}(t)C_{\kappa}(t) $ we obtain
\begin{equation}\label{eqSilvana4}
\begin{array}{ccl}\displaystyle
(S_{\kappa}^{n-1}\mu')'(t) + \lambda S_{\kappa}^{n-1}(t)
\left(\displaystyle\frac{n-1}{n} +\displaystyle \frac{1}{nC_{\kappa}^{2}(t)}
-\displaystyle\frac{\lambda }{n^2}\displaystyle\frac{S_{\kappa}^{2}(t)}{C_{\kappa}^{2}(t)}\right)
\mu(t) = 0
\end{array}
\end{equation}
The functions $v$ and $\mu$ then satisfy the follow identities:
\begin{equation}\label{eqSilvana5}
\begin{array}{rll}
\left(S_{\kappa}^{n-1}\;v'\right)'(t) +
\lambda S_{\kappa}^{n-1}(t)\;v(t)&=&0\\
&&\\
(S_{\kappa}^{n-1}\mu')'(t) + \lambda S_{\kappa}^{n-1}(t)
\left(\displaystyle\frac{n-1}{n} + \displaystyle\frac{1}{n\,C_{\kappa}^{2}(t)}
-\displaystyle\frac{\lambda }{n^2}\displaystyle\frac{S_{\kappa}^{2}(t)}{C_{\kappa}^{2}(t)}\right)
\mu(t)& =& 0
\end{array}
\end{equation}
Multiply the first identity  of (\ref{eqSilvana5}) by $\mu(t)$ and the second identity by $-v(t)$ adding
them and integrating from $0$ to $t$ we obtain
\begin{equation}
\begin{array}{ccl}
S_{\kappa}^{n-1}\left(v'\mu - \mu'v\right)(t) =\displaystyle
-\int_{0}^{t}\lambda S_{\kappa}^{n-1}(t) \left(\displaystyle\frac{1}{n} -
\displaystyle\frac{1}{nC_{\kappa}^{2}(t)}
+\displaystyle\frac{\lambda }{n^2}\displaystyle\frac{S_{\kappa}^{2}(t)}{C_{\kappa}^{2}(t)}\right)
\mu(t)v(t) dt
\end{array}
\end{equation}
Clearly
\[
\displaystyle\lambda S_{\kappa}^{n-1}(t) \left(\displaystyle\frac{1}{n} -
\displaystyle\frac{1}{nC_{\kappa}^{2}(t)}
+\displaystyle\frac{\lambda }{n^2}\displaystyle\frac{S_{\kappa}^{2}(t)}{C_{\kappa}^{2}(t)}\right)
\mu(t)v(t) > 0\]
Therefore  we have
$
v'(t)\mu(t) - \mu'(t)v(t) < 0$ for $t \in (0,r).
$ This settle the case $\kappa<0$.

\vspace{2mm}
\noindent Suppose  that $\kappa > 0$. We have
$
S_{\kappa}(t) = \displaystyle\frac{1}{\sqrt{\kappa}}\sin{\sqrt{\kappa}t}
$
for $t \in (0,r)$ with $r < \displaystyle\frac{\pi}{\sqrt{\kappa}}$.
Define
$\mu(t) =
C_{\kappa}(t)^{\displaystyle\frac{-\lambda}{n\kappa}}$.
Thus $\mu'(t)=\displaystyle\frac{\lambda}{n}S_{\kappa}(t)C_{\kappa}(t)^{\displaystyle\frac{-\lambda}{n\kappa}-1}$. With a similar procedure we obtain that  $v$ and $
\mu$ satisfy the following differential identities
\begin{equation}
\begin{array}{rcl}
\left(S_{\kappa}^{n-1}\;v'\right)'(t) +
\lambda S_{\kappa}^{n-1}(t)\;v(t)&=&0\\
&&\\
(S_{\kappa}^{n-1}\mu')'(t) - \lambda S_{\kappa}^{n-1}(t)
\displaystyle\left(\frac{n-1}{n} + \displaystyle\frac{1}{nC_{\kappa}^{2}(t)}
+\displaystyle\frac{\lambda}{n^2}\displaystyle\frac{S_{\kappa}^{2}(t)}{C_{\kappa}^{2}(t)}\right)
\mu(t) &=& 0
\end{array}
\end{equation}
In (28) we multiply the first identity by $\mu$ and the second by
$-v$ adding them and integrating from $0$ to $t$ the resulting
identity we obtain
\begin{equation}
S_{\kappa}^{n-1}(v'\mu - \mu'(t)v)(t) =
-\int_{0}^{t}\lambda_1S_{\kappa}^{n-1}(t) \left( 2 -\displaystyle\frac{1}{n} +
\displaystyle\frac{1}{nC_{\kappa}^{2}(t)}
+\displaystyle\frac{\lambda}{n^2}\frac{S_{\kappa}^{2}(t)}{C_{\kappa}^{2}(t)}\right)
\mu(t)v(t) dt
\end{equation}
 The term $\displaystyle
\lambda S_{\kappa}^{n-1}(t) \left( 2 -\frac{1}{n} +
\frac{1}{nC_{\kappa}^{2}(t)}
+\frac{\lambda }{n^2}\frac{S_{\kappa}^{2}(t)}{C_{\kappa}^{2}(t)}\right)
\mu(t)v(t) > 0
$ is positive for $t\in (0,r)$, $r< \pi/2\sqrt{\kappa}$. Therefore
we have that
$
v'(t)\mu(t) - \mu'(t)v(t) < 0$ for $t \in (0,r)$, $r< \pi/2\sqrt{\kappa}$

\vspace{2mm}

\noindent The case $\kappa=0$ we proceed similarly. Define  $\mu (t)=e^{-\displaystyle\frac{\lambda t^{2}}{2n}}$.
The functions $v$ and $\mu$ satisfy the following identities,
\begin{equation}\label{eqSubm6}\begin{array}{lcl} (t^{n-1}v'(t))' +
\lambda t^{n-1}v(t)\,=\,0& & \\
&& \\
(t^{n-1}\mu'(t))' + \lambda t^{n-1}(1-
\displaystyle\frac{\lambda \,t^{2}}{n^{2}})\mu(t)\,=\,0& &
\end{array}
\end{equation}In (\ref{eqSubm6}) we multiply the first identity by
$\mu$ and the second by $-v$ adding them and integrating from $0$
to $t$ the resulting identity we obtain, $$t^{n-1}(v'(t)\,\mu(t)
-\,v(t)\,\mu'(t))=-\frac{\lambda^{2}}{n^{2}}\int_{0}^{t}\mu(t)\,v(t)
<0,\,\,\ \forall t\in (0,r).
$$
Then $\mu(t)v'(t) - \mu'(t)v(t)<0. $  This proves the lemma.
\end{proof}

\section{ Proof of the Results}
\subsection{Proof of Theorem \ref{thm2}}

\begin{proof}
Let $\varphi:M\hookrightarrow N\times \mathbb{R}$ be a minimal immersion of a $m$-dimensional Riemannian manifold  $M$, where $N$ is a  complete Riemannian $n$-manifold with radial  sectional curvature along the geodesics $\gamma (t)$ issuing from  a point  $x_{0}\in N$ bounded from above  $K(\gamma'(t), v)\leq \kappa$, $v\in T_{\gamma (t)}N$, $\vert v\vert =1$, $v\perp \partial t$ . Let  $\OM \subset \varphi^{-1}(B_{N}(x_{0},r)\times \mathbb{R})$, $r< \min \{\inj(x_{0}), \pi/2
\sqrt{\kappa} \}$, $(\pi/2
\sqrt{\kappa}=\infty$ if $\kappa\leq 0)$ be a connected component.  Let $\rho_{N}(x)={\rm dist}_{N}(x_{0},x)$ be the distance function  in $N$ to $x_{0}$
and let $v:
B_{\mathbb{N}^{m-1}(\kappa)}(r) \rightarrow \mathbb{R}$ be a first
positive eigenfunction associated with the first eigenvalue
$\lambda_1(B_{\mathbb{N}^{m-1}(\kappa)}(r)) $ of the geodesic ball of radius $r$ in the simply connected
 $(m-1)$-dimensional space form $\mathbb{N}^{m-1}(\kappa)$ of constant sectional curvature
$\kappa$. The eigenfunction $v$ is radial, i.e. $v(x)=v(\vert x\vert)$ and we can look at $v$ as it were defined in $[0,r]$   satisfying the equation
\begin{equation}\label{sil01}
v''(t) + (m-2)\frac{C_{\kappa}}{S_{\kappa}}(t) v'(t) +
\lambda_1(B_{\mathbb{N}^{m-1}(\kappa)}(r))v(t) = 0, \;\;\; t\in
(0,r).
\end{equation} Choose the first eigenfunction that satisfies the initial conditions
 $v(0) = 1$ and $v'(0)=0$.
Define $g:B_{N}(r)\times
\mathbb{R}\rightarrow \mathbb{R}$ by $g= v\circ
\rho_{N}\circ p$ and  $f:\Omega
\rightarrow \mathbb{R}$ by  $f= g \circ \varphi$,  where $p:N\times \mathbb{R} \rightarrow N$ is
 the projection in the first factor.  Setting $X=\grad \log f$ we have that $\diver X-\vert X\vert^{2}=\triangle f/f$.
 Thus by Theorem   (\ref{thmBM1}) we have  that \[\lambda^{\ast}(\OM)\geq \inf_{\OM}(-\displaystyle\frac{\triangle f}{f}).\]
\noindent We are going to give lower bound $-\triangle f/f$.  Let $x\in \OM$  and
 $ \{e_{1},\ldots, e_{m}\}$ be any orthonormal basis
for $T_{x}\OM$. The Laplacian of $f$ at $x$ is given by
\begin{equation}\label{eqLaplacian}
\triangle_{M}\,f (x)    =   \sum_{i=1}^{m}\hess_{(N\times \mathbb{R})}\,g (\varphi
(x))\,(e_{i},e_{i})=\sum_{i=1}^{m}\hess_{N}\,v\circ \rho_{N}(q)(e_{i}, e_{i})\end{equation}
\noindent Consider the orthonormal basis $\{\grad \rho_{N}, \partial/\partial\theta_{1}, \ldots, \partial/\partial\theta_{n-1},\,\partial/\partial s \}$ for $T_{(q,s)}(N\times \mathbb{R})$, where $\{ \grad \rho_{N}, \partial/\partial\theta_{1}, \ldots, \partial/\partial\theta_{n-1}\}$ is an orthonormal basis for $T_{q}N$ (polar
coordinates).
Let  $\{e_{1}, \ldots, e_{m}\}$ be an orthonormal basis for $T_{x}\OM$ and write\begin{equation}\label{eqBase}e_{i}=a_{i}\cdot\grad \rho_{N}+b_{i}\cdot\partial/\partial s +\sum_{j=1}^{n-1}c_{i}^{j}\cdot\partial/\partial\theta_{j}.\end{equation}  Where $a_{i}, b_{i}, c_{i}^{j}$ are constants satisfying $ a_{i}^{2}+b_{i}^{2}+\sum_{j=1}^{n-1}(c_{i}^{j})^{2}=1, \,i=1,\ldots,m $. Computing $\triangle_{M} f (x)$ we have, (recall that $\varphi (x)=(q,s)$ and we are letting $t=\rho_{N}(q)$)
\begin{eqnarray} \triangle f (x)&=& \sum_{i=1}^{m}\left[e_{i}(v'(t))\langle \grad \rho_{N}, e_{i}\rangle + v'(t)\hess_{N}\rho_{N}(e_{i},e_{i})\right]\nonumber \\
&=&v''(t)\sum_{i=1}^{m}a_{i}^{2}+v'(t)\sum_{i=1}^{m}\sum_{j=1}^{n-1}(c_{i}^{j})^{2}\hess \rho_{N}(\partial/\partial\theta_{j},\partial/\partial\theta_{j})\nonumber
\end{eqnarray}Since $v'(t)\leq 0$ we have by the Hessian Comparison Theorem that
\begin{eqnarray}\label{eqSilvana9} -\triangle f (x)&\geq & -v''(t)\sum_{i=1}^{m}a_{i}^{2}-
v'(t)\frac{C_{\kappa}}{S_{\kappa}}(t)\sum_{i=1}^{m}\sum_{j=1}^{n-1}(c_{i}^{j})^{2}\nonumber\\ && \nonumber  \\
&=& -v''(t)\sum_{i=1}^{m}a_{i}^{2}-
v'(t)\frac{C_{\kappa}}{S_{\kappa}}(t)\left[m-\sum_{i=1}^{m}a_{i}^{2}-\sum_{i=1}^{m}b_{i}^{2}\right]\nonumber\\
&&\nonumber \\
&=&-v''(t)-(m-2)v'(t)\frac{C_{\kappa}}{S_{\kappa}}(t) \\
&&\nonumber  \\
&& +v''(t)\left[1-\sum_{i=1}^{m}a_{i}^{2}\right]
-v'(t)\frac{C_{\kappa}}{S_{\kappa}}(t)\left[1-\sum_{i=1}^{m}a_{i}^{2}+1-\sum_{i=1}^{m}b_{i}^{2}\right]\nonumber \\ &&\nonumber \\
&=& \lambda_{1}(B_{\mathbb{N}^{m-1}(\kappa)}(r))v(t)\nonumber \\
&& \nonumber \\ && +v''(t)\left[1-\sum_{i=1}^{m}a_{i}^{2}\right]
-v'(t)\frac{C_{\kappa}}{S_{\kappa}}(t)\left[1-\sum_{i=1}^{m}a_{i}^{2}+1-\sum_{i=1}^{m}b_{i}^{2}\right]\nonumber
\end{eqnarray}
\noindent We will show that the last line of (\ref{eqSilvana9}) is nonnegative, this is  \begin{equation} \label{eqSilvana10} v''(t)\left[1-\sum_{i=1}^{m}a_{i}^{2}\right]
-v'(t)\displaystyle\frac{C_{\kappa}}{S_{\kappa}}(t)\left[1-\sum_{i=1}^{m}a_{i}^{2}+1-\sum_{i=1}^{m}b_{i}^{2}\right]\geq 0.\end{equation}

\noindent Substituting $v''(t)=-(m-2)v'(t)\displaystyle\frac{C_{\kappa}}{S_{\kappa}}(t)-\lambda_{1}(B_{\mathbb{N}^{m-1}(\kappa)}(r))v(t)$ in  (\ref{eqSilvana10}) we obtain
 \begin{eqnarray}\label{eqIgualdade}v''(t)\left[1-\sum_{i=1}^{m}a_{i}^{2}\right]
-v'(t)\displaystyle\frac{C_{\kappa}}{S_{\kappa}}(t)\left[1-\sum_{i=1}^{m}a_{i}^{2}+1-\sum_{i=1}^{m}b_{i}^{2}\right]&=& \nonumber \\
&&\nonumber \\
-\left[(m-1)v'(t)\displaystyle\frac{C_{\kappa}}{S_{\kappa}}(t)+
\lambda_{1}(B_{\mathbb{N}^{m-1}(\kappa)}(r))v(t)\right]\left[1-\sum_{i=1}^{m}a_{i}^{2}\right]&& \\
&&\nonumber \\
-v'(t)\displaystyle\frac{C_{\kappa}}{S_{\kappa}}(t)\left[1-\sum_{i=1}^{m}b_{i}^{2}\right]&\geq &0\nonumber
 \end{eqnarray}since  we have that $ (m-1)v'(t)\displaystyle\frac{C_{\kappa}}{S_{\kappa}}(t)+
\lambda_{1}(B_{\mathbb{N}^{m-1}(\kappa)}(r))v(t)<0$ by  Lemma (\ref{wolverine}) and $\left[1-\sum_{i=1}^{m}a_{i}^{2}\right]\geq 0$ and $\left[1-\sum_{i=1}^{m}b_{i}^{2}\right]\geq 0$.
From (\ref{eqSilvana9}) we have $\displaystyle -\frac{\triangle f }{f}(x)\geq \lambda_{1}(B_{\mathbb{N}^{m-1}(\kappa)}(r)).$  Therefore, \[\lambda^{\ast}(\OM)\geq \inf_{\OM}(-\triangle f /f)\geq \lambda_{1}(B_{\mathbb{N}^{m-1}(\kappa)}(r)).\]

   \noindent To prove the last assertion of Theorem  (\ref{thm2}) we need the following proposition proved in \cite{bessa-montenegro2}.

  \begin{proposition}\label{2.2}Let $\Omega$ be a bounded
domain in a smooth Riemannian manifold. Let $v\in C^{2}(\Omega)\cap
C^{0}(\overline{\Omega})$, $v>0$ in $\Omega$ and $v\vert
\partial \Omega =0$. Then \begin{equation}\lambda^{\ast}(\Omega)\geq
\inf_{\Omega}(-\frac{\triangle
v}{v}).\label{eqProp1}\end{equation}Moreover,
$\lambda^{\ast}(\Omega)= \inf_{\Omega}(-\displaystyle\frac{\triangle
v}{v})$ if and only if $v=u$,  where $u$ is a positive eigenfunction
of $\Omega$, i.e. $\triangle u+\lambda^{\ast}(\Omega)u=0$.
\end{proposition}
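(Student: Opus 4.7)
The plan is a Barta-type argument that uses the positivity of $v$ as a weighted multiplier against test functions. Set $c := \inf_{\OM}(-\triangle v/v)$; I may assume $c > -\infty$, so that $-\triangle v \geq c\, v$ pointwise in $\OM$. For any $\phi \in C_{c}^{\infty}(\OM)$, the support of $\phi$ is a compact subset of $\OM$ on which $v \geq \delta > 0$, so $\phi^{2}/v$ is itself a valid compactly supported test function. Testing the inequality $-\triangle v \geq c\, v$ against $\phi^{2}/v$ and integrating by parts yields
\begin{equation*}
c\int_{\OM}\phi^{2} \;\leq\; \int_{\OM}\frac{\phi^{2}}{v}(-\triangle v) \;=\; \int_{\OM}\grad v \cdot \grad\!\left(\frac{\phi^{2}}{v}\right) \;=\; \int_{\OM}\frac{2\phi}{v}\,\grad v\cdot\grad\phi \;-\; \int_{\OM}\frac{\phi^{2}}{v^{2}}\abs{\grad v}^{2}.
\end{equation*}
The pointwise AM--GM bound $2(\phi/v)\grad v \cdot \grad\phi \leq \abs{\grad\phi}^{2} + (\phi^{2}/v^{2})\abs{\grad v}^{2}$ absorbs the cross term, giving $c\int_{\OM}\phi^{2} \leq \int_{\OM}\abs{\grad\phi}^{2}$. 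Taking the infimum over $\phi$ and using density of $C_{c}^{\infty}(\OM)$ in $H_{0}^{1}(\OM)$ yields $\lambda^{\ast}(\OM) \geq c$, which is the main inequality.

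For the equality case, suppose $\lambda^{\ast}(\OM) = c$. Since $\OM$ is bounded, the Dirichlet Laplacian on $\OM$ has discrete spectrum and admits a first, strictly positive, eigenfunction $u$ satisfying $-\triangle u = \lambda^{\ast}(\OM)\,u$. I would then examine
\begin{equation*}
I \;:=\; \int_{\OM} u\bigl(-\triangle v - c\, v\bigr).
\end{equation*}
The integrand is nonnegative everywhere, because $u > 0$ in $\OM$ and $-\triangle v - c\,v \geq 0$ by definition of $c$. On the other hand, a Green's identity computation using the vanishing of $u$ and $v$ on $\POM$ rewrites $I$ as $\int_{\OM} v(-\triangle u - c\,u) = 0$. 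So the nonnegative integrand vanishes identically; combined with $u > 0$ this forces $-\triangle v = c\,v$ throughout $\OM$, so $v$ is itself a positive first eigenfunction and coincides with $u$ up to a positive scalar by uniqueness of the first eigenfunction on the connected domain.

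The main obstacle in this plan is justifying the Green's identity step in the equality case, since $u$ and $v$ are only $C^{2}$ in the interior and merely continuous up to the closure, and $\POM$ need not be smooth. I would handle this by exhausting $\OM$ with a family $\OM_{n}\Subset\OM$ of relatively compact open sets with smooth boundary, applying the usual Green's identity on each $\OM_{n}$ to obtain $\int_{\OM_{n}}(u\triangle v - v\triangle u) = \int_{\partial\OM_{n}}(u\,\partial_{\nu}v - v\,\partial_{\nu}u)$, and then letting $n\to\infty$, using $u,v \in C^{0}(\overline{\OM})$ together with $u = v = 0$ on $\POM$ to drive the boundary flux terms to zero.
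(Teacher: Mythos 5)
The paper does not prove Proposition \ref{2.2} here but cites it to \cite{bessa-montenegro2}, so I compare your argument against what is already implicit in the stated tools. Inequality (\ref{eqProp1}) is exactly Theorem \ref{thmBM1} applied to $X=-\grad\log v$, since then $\diver X-\vert X\vert^2=-\triangle v/v$. Your weighted test-function computation is a correct, self-contained rendering of that same estimate --- the AM--GM step is precisely the pointwise ``ground-state transform'' inequality hiding inside the vector-field bound. So the first half of your proposal is correct and takes essentially the same route, just unpacked.

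The equality case, however, has a genuine gap --- in fact two. First, you take for granted that the first Dirichlet eigenfunction $u$ lies in $C^{0}(\overline{\OM})$ and vanishes on $\POM$; for an arbitrary bounded domain in a Riemannian manifold one only gets $u\in H_{0}^{1}(\OM)\cap C^{\infty}(\OM)$, and continuity up to the closure is a boundary-regularity statement not provided by the hypotheses. Second, even granting that, your exhaustion argument does not close: the flux integrals $\int_{\partial\OM_{n}}(u\,\partial_{\nu}v-v\,\partial_{\nu}u)$ carry normal derivatives of $u$ and $v$, and nothing in the hypotheses controls $\vert\grad u\vert$ or $\vert\grad v\vert$ near $\POM$, so ``$u,v\to 0$ at the boundary'' is not by itself enough to drive those terms to zero. (In addition, $\int_{\OM}u\,\triangle v$ need not converge absolutely a priori, which is what Green's identity would require.) A gap-free version stays entirely inside the inequality computation you already set up: for $\phi\in C_{c}^{\infty}(\OM)$ the same integration by parts gives the exact identity
\begin{equation*}
\int_{\OM}\abs{\grad\phi}^{2} - c\int_{\OM}\phi^{2} \;=\; \int_{\OM}\Bigl\vert\grad\phi-\frac{\phi}{v}\grad v\Bigr\vert^{2} \;+\; \int_{\OM}\frac{\phi^{2}}{v}\bigl(-\triangle v-c\,v\bigr),
\end{equation*}
with both right-hand terms nonnegative. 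If $\lambda^{\ast}(\OM)=c$, take a normalized minimizing sequence $\phi_{n}\in C_{c}^{\infty}(\OM)$; by Rellich (valid since $\OM$ is bounded) a subsequence converges in $L^{2}$, weakly in $H_{0}^{1}$, and almost everywhere to a positive first eigenfunction $\phi_{\infty}$. Both right-hand terms tend to $0$, and Fatou applied to the second forces $\int_{\OM}\phi_{\infty}^{2}(-\triangle v-cv)/v=0$, hence $-\triangle v=cv$ throughout $\OM$ and $v$ is a first eigenfunction. This route never touches $\POM$ and sidesteps both problems.
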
\noindent If we have equality $\lambda_{1}(\OM)= \lambda_{1}(B_{\mathbb{N}^{m-1}(\kappa)}(r))$ we have that $f$ is an eigenfunction and  the expression  (\ref{eqIgualdade}) is zero (at each point of $\OM$).  This happens if and only if \[1=\sum_{i=1}^{m}\alpha_{i}^{2} = \sum_{i=1}^{m}\beta_{i}^{2}.\] On the other hand, we can write at each point $x\in \OM$ \[ \grad_{N}=\sum_{i=1}^{m}\alpha_{i} e_{i} + (grad_{N})^{\perp},\]where $(\grad_{N})^{\perp}$ is normal to the tangent space of $T_{x}\OM$. Likewise we can write
   \[\partial/\partial s=\sum_{i=1}^{m}\beta_{i} e_{i} + (\partial/\partial s)^{\perp}.\]Since $\Vert grad_{N}\Vert^{2} =\sum_{i=1}^{m}\alpha_{i}^{2}+\Vert (\grad_{N})^{\perp} \Vert$ and $\Vert \partial/\partial s \Vert^{2}= \sum_{i=1}^{m}\beta_{i}^{2}+\Vert (\partial/\partial s)^{\perp}\Vert^{2} $ we conclude that $(grad_{N})^{\perp}=0=(\partial/\partial s)^{\perp}$. Thus the tangent space $T_{x}\OM$ contains the vectors $\grad \rho_{N}$ and $\partial/\partial s$ for each $x\in \OM$. Thus, we could have   chosen in (\ref{eqbase}) an orthonormal basis for  $T_{x}\OM$  in the following way $e_{1}=\grad \rho_{N}$, $e_{2}=\partial /\partial s$ and $\{e_{3}, \ldots, e_{m}\}\subset\{ \partial /\partial \theta_{1}, \ldots, \partial /\partial \theta_{n-1}\}$. Clearly the set of vectors $\grad_{N}$ and $\partial /\partial s$ form  smooth vector fields on $\OM$ since they are the restrictions of smooth vector fields on $N\times \mathbb{R}$ to a smooth immersed submanifold. The integral curves of the vector field $\partial/\partial s$ in $\OM$ are $\{x\}\times \mathbb{R}$ contained in $\varphi(M)$ and $\OM=\varphi^{-1}(B_{N}(r)\times\mathbb{R})$ is not bounded.
  This proves Theorem (\ref{thm2}).
  \end{proof}

\subsection{Proof of Theorem \ref{thm3}}Let $\varphi :M\hookrightarrow N\times \mathbb{R}$ be a complete immersed $m$-submanifold with locally bounded mean curvature, where $N$ has  radial sectional curvature bounded above $K_{N}\leq \kappa$ along the geodesics issuing from $x_{0}$. Define $\tilde{\rho_{N}}:N\times \mathbb{R}\to \mathbb{R}$ by $\tilde{\rho_{N}}(x,t)=\rho_{N}(x)$,
$\rho_{N}(x)={\rm dist}_{N}(x_{0},x)$. Let
 $\OM(r)=\varphi^{-1}\left( B_{N}(x_{0},r)\times \mathbb{R}\right)$, $f=\tilde{\rho_{N}}\circ \varphi :\OM(r)\to  \mathbb{R}$ and $X=\grad f$.
 The idea is to choose $r<\min\{\inj_{N}(x_{0}), \pi/2\sqrt{\kappa}\}$, $\pi/2\sqrt{\kappa}=\infty$ if $\kappa \leq 0$, properly
  such that $\inf_{\OM(r)}\diver X>0$ then by Theorem (\ref{thmBM2}) we have that \[\lambda^{\ast}(\OM(r))\geq \left( \displaystyle\frac{\inf \diver X}{2 \sup \vert X\vert}\right)^{2}\cdot\] Observe that $\diver X=\triangle_{M}f$ and as in
(\ref{eqBF3}) we have \[\triangle_{M}f(x)=\left[\sum_{i=1}^{m}\hess_{N\times \mathbb{R}}\tilde{\rho_{N}}(e_{i},e_{i})+\langle \grad_{N\times \mathbb{R}}\tilde{\rho_{N}}, \stackrel{\rightarrow}{H} \rangle \right](\varphi (x))\]Where  $\stackrel{\rightarrow}{H}=\sum_{i=1}^{m}\alpha (e_{i},e_{i})$ is the mean curvature vector of $\varphi (M)$ at $\varphi (x)$ and $\{e_{1}, \ldots, e_{m}\}$ is  an orthonormal basis of $T_{x}M$   as in (\ref{eqBase}) identified with $\{d\varphi\cdot e_{1}, \dots, d\varphi\cdot e_{m}\}$. Now \begin{eqnarray}\sum_{i=1}^{m}\hess_{N\times \mathbb{R}}\tilde{\rho_{N}}(e_{i},e_{i})&=&\sum_{i=1}^{m}\hess_{N}\rho_{N}(e_{i},e_{i}) \nonumber\\
&=& \sum_{i=1}^{m}\sum_{j=1}^{n-1}(c_{j}^{i})^{2}\hess_{N}\rho_{N}(\partial/\partial\theta_{j},\partial/\partial\theta_{j})\\
&\geq & \sum_{i=1}^{m}(1-a_{i}^{2}-b_{i}^{2})\frac{C_{\kappa}}{S_{\kappa}}(r)\nonumber\label{eq3.8}
\end{eqnarray}On the other hand $\langle \grad_{N\times \mathbb{R}}\tilde{\rho_{N}},\stackrel{\rightarrow}{H}\rangle =\langle \grad_{N}\rho_{N},\stackrel{\rightarrow}{H}   \rangle$

 \begin{eqnarray}\label{eq3.9}\langle \grad_{N}\rho_{N},\stackrel{\rightarrow}{H}   \rangle & =&\langle (\grad_{N}\rho_{N})^{\perp},\stackrel{\rightarrow}{H}   \rangle\nonumber \\ &\leq& \vert H \vert\sqrt{1-\sum_{i=1}^{m}a_{i}^{2}}\\
&\leq& h(x_{0}, r)\sqrt{1-\sum_{i=1}^{m}a_{i}^{2}}\nonumber\end{eqnarray} Since $\vert \grad_{N}\rho_{N})^{\perp} \vert^{2} =(1-\sum_{i=1}^{m}a_{i}^{2}).$ Therefore from (\ref{eq3.8}) and (\ref{eq3.9}) we have
\[  \triangle_{M}f(x)\geq  (m-2)\frac{C_{\kappa}}{S_{\kappa}}(r)-h(x_{0}, r)>0\] We have two cases to consider.
 First is the case that $\vert h(x_{0},r)\vert <\Lambda^{2}<\infty$ and  we choose
$r\leq \min\{\inj_{N}(x_{0}), \pi/2\sqrt{\kappa}, (C_{\kappa}/S_{\kappa})^{-1}(\Lambda^{2}/(m-2))\}. $
 In case that
 $\lim_{r\to \infty}h(x_{0},r)=\infty$ there is $r_{0}$ so that $(m-2)\frac{C_{\kappa}}{S_{\kappa}}(r_{0})-h(x_{0}, r_{0})=0$ since we can assume without loss of generality that $h(x_{0},r)$ is a continuous non-decreasing function in $r$.
Then we choose
  $r\leq \min\{\inj_{N}(x_{0}), \pi/2\sqrt{\kappa}, (C_{\kappa}/S_{\kappa})^{-1}(h(x_{0},r_{0})/(m-2))\}. $ In both cases we have
   \[\lambda^{\ast}(\OM(r))\geq \left[\frac{(m-2)\frac{C_{\kappa}}{S_{\kappa}}(r)-h(x_{0}, r)}{2}\right]^{2}\cdot\] To prove Corollary (\ref{cor1}) just  see that $\OM (R_{p_{1}(M)})=M$ and $\lambda^{\ast}(M)=0$.

\bibliographystyle{amsplain}

\end{document}